\numberwithin{equation}{section}
\newtheorem{thm}{Theorem}[section]
\newtheorem{prop}[thm]{Proposition}
\newtheorem{lem}[thm]{Lemma}
\newtheorem{df}[thm]{Definition}
\newtheorem{rem}[thm]{Remark}
\newcommand{\C}{\mathbb{C}}
\newcommand{\Cwp}{\mathbb{C}_\wp}
\newcommand{\End}{\mathrm{End}}
\newcommand{\F}{\mathbb{F}}
\newcommand{\G}{\mathbb{G}}
\newcommand{\Ga}{\mathbb{G}_a}
\newcommand{\cO}{\mathcal{O}}
\newcommand{\Spec}{\mathrm{Spec}\,}
\newcommand{\vp}{\varphi}
\title{An explicit form of the canonical submodule of a Drinfeld module}
\author{Satoshi KONDO\footnote{Faculty of Mathematics, National Research University Higher School of Economics, Moscow, Russia; Kavli Institute for the Physics and Mathematics of the Universe, Japan}, Yusuke SUGIYAMA\footnote{Department of Mathematics, Graduate School of Science, Osaka university}}
\begin{document}
	
\maketitle

\renewcommand{\thefootnote}{\fnsymbol{footnote}}
\footnote[0]{2010 Mathematics Subject Classification. 11G09, 14L05}
\footnote[0]{Keywords and phrases. Drinfeld modules, canonical subgroup, function fields, Hasse invariant}

\begin{abstract}
We define the canonical submodule of a Drinfeld module of 
rank greater than one 
over the affine line over a finite field.
(This extends the definition of the level 1 canonical subgroup of Hattori for rank 2 with ordinary reduction.)
We give a criterion for the existence of the canonical submodule in terms of a lift of the Hasse invariant.
Then, we give an explicit form of the canonical submodule.    
The main tool is formal Drinfeld modules of Rosen.

Recall that a canonical subgroup of an elliptic curve plays an important role in the theory of $p$-adic modular forms.
An explicit form in this case is given by Coleman, and our result is 
its function field analogue.
\end{abstract}

\section{Introduction}
In this paper, we define the canonical submodule of a Drinfeld module 
(Definitions~\ref{canonical sub},~\ref{canonical subsch}), 
give a criterion (Theorem~\ref{condition 1})
for the existence 
in terms of a lift of the Hasse invariant 
(Remark~\ref{Hasse}).   
Then we give an explicit form 
(the affine coordinate ring) 
of the canonical submodule 
(Theorem~\ref{main thm}).

Canonical subgroups of elliptic curves play an important role in the theory of $p$-adic modular forms \cite{K}.  
An explicit form is given by Coleman \cite{C}.   Let us recall his result.
Let $p>3$ be a prime. We denote by $R$ a subring of the ring of integers of $\C_p$ (the completion of an algebraic 
closure of $\mathbb{Q}_p$). Note that for a pair $(E,\omega)$ of an elliptic curve over $R$ and its invariant differential, the Hasse invariant of the pair $(E,\omega)\mod p$ is lifted to the associated Eisenstein series $E_{p-1}(E,\omega)$ of weight $p-1$ since $p>3$.
Coleman showed that the canonical subgroup of an elliptic curve $E$ is isomorphic to the subgroup scheme of 
the additive group scheme $\G_{a, R}$ given by
$$
\Spec R[x]/(x^p+(p/E_{p-1}(E,\omega))x),
$$ 
(cf \cite{C} p.4 Theorem 2.1.).

Recently a function field analogue of the $p$-adic modular forms theory has been developed by Goss, Hattori and many others (see e.g. \cite{G},\cite{H1},\cite{H2}). 
In this note, we consider a function field analogue of Coleman's result above.

One difference between the case of elliptic curves and of Drinfeld modules is that the canonical subgroup is equipped 
with the structure of an $A$-module (where $A=\F_q[T])$), as any Drinfeld module is an $A$-module.    
We therefore call it a canonical ($A$-)submodule 
rather than a canonical subgroup, and, 
for the explicit description,
we also give the structure as an $A$-module explicitly.

The paper is organized as follows.   
In Section~\ref{sec:statement}, 
we define the {\itshape{canonical submodule (scheme)}} of a Drinfeld module and give the statement of our 
main result (Theorem~\ref{main thm}).
In Section~\ref{sec:canonical submodules}, we study Newton polygons and give a criterion for the existence 
of canonical submodule schemes. Section~\ref{sec:proof} is 
devoted to the proof of Theorem~\ref{main thm}.

{\bf{Acknowledgment}}
We thank Shin Hattori for suggesting 
this topic and for numerous valuable comments.
The first author is supported by WPI initiative, MEXT, Japan. 
The second author expresses 
his gratitude to his supervisor 
Professor Seidai Yasuda for his advice and encouragement.

\section{Statement of the main result}
\label{sec:statement}
The main aim is to give the statement of the main theorem
(Theorem~\ref{main thm}).  
\subsection{Setup}
Let $A=\F_q[T]$  be the polynomial ring over a finite field of $q$ elements. 
Fix a prime ideal $\wp$ of $A$ generated by a monic irreducible polynomial 
$\varpi=\varpi(T) \in A$ of degree $d\geq 1$. 
Set $\kappa(\wp)=A/\wp$. 
We denote by $A_{\wp}$ the 
$\wp$-adic completion of $A$. 
Note that $A_{\wp}$ contains a subfield, denoted by $\F$, 
that is isomorphic to $\kappa(\wp)$. 
We let $\iota$ denote the isomorphism
$\iota: \F \subset A_\wp \to A_\wp/\wp A_\wp$
where the arrow is the canonical quotient map.
The $\wp$-adic completion of an algebraic closure 
of the fraction field of $A_{\wp}$ is denoted by $\C_{\wp}$. 
We denote by $v(\star)=v_{\wp}(\star)$ (resp.\ $\left| \star \right| $)
the valuation (resp.\ absolute value) on 
$\C_{\wp}$ normalized as $\left| \varpi \right| = q^{-v(\varpi)}=q^{-d}$. 

Throughout this article, we let $\cO$ 
be a local subring of $\C_{\wp}$ 
with the maximal ideal 
$\mathfrak{m}=\lbrace x \in \cO \mid v(x)  > 0 \rbrace$.
Moreover we assume that the ring $\cO$ contains $A_{\wp}$ and that the ring $\cO$ is complete with respect to the absolute value $\left| \star \right| $.

Let $(E, \varphi)$ be a Drinfeld module over 
$\cO$ of rank $r\geq 2$.  
This $E$ is the additive group scheme 
$\Ga=\G_{a,O}$ over $\cO$
and $\varphi: A \to \End(E)$ is a
ring homomorphism.   Here, the endomorphisms are as 
a group scheme over $\cO$.
We fix a parameter $X$ of the additive group scheme 
$E=\Ga=\G_{a,\cO}$. Thus we may identify the endomorphism ring $\End(E)$ 
with a subring (by composition) of $\cO[X]$. 
Then we write the action of $\varpi \in A$ as
$$
\varphi_{\varpi}(X)=\sum_{i=0}^{rd}H_i X^{q^i} \in \cO[X].
$$
 Note that we have $H_0=\varpi$ and $H_{rd} \in \cO^{\times}$. 

\begin{rem}[a lift of the Hasse invariant]
\label{Hasse}
The Hasse invariant of a rank two Drinfeld module over an algebraic closure of $A/\wp$
is defined by Gekeler.    We are particularly interested in the coefficient $H_d \in \cO$.
This is a lift of the Hasse invariant.
\end{rem}

\subsection{Canonical submodule schemes}

In this subsection, we define the canonical 
submodule of a Drinfeld module in a way similar to that 
in \cite{L}. 

Let $\varphi$ be a Drinfeld module over 
$\mathcal{O}$ of rank $r\geq 2$. 
We denote by $\varphi[\varpi]$ 
the $\varpi$-torsion subscheme 
of $\mathbb{G}_a$,
and by 
$\varphi[\varpi](\C_{\wp})$ 
the set of $\C_{\wp}$-valued points. 
Note that $\varphi[\varpi](\C_{\wp})$ 
is a free $\kappa(\wp)$-module 
of rank $r$ via $\varphi$. 

For a non-negative real number $\delta \in \mathbb{R}$, we set

$$
B(\delta)=
\left\lbrace 
x \in \C_{\wp} \mid \left| x \right| \leq \delta
\right\rbrace .
$$
Note that $\varphi[\varpi](\C_{\wp}) \cap B(\delta)$ is an $A$-submodule of $\varphi[\varpi](\C_{\wp})$ (and thus $\kappa(\wp)$-submodule since it is $\wp$-torsion) via $\varphi$. Moreover we set 
$$
\delta_0=
\min
\left\lbrace 
\left| x\right| \mid 0\neq x \in \varphi[\varpi](\C_{\wp})
\right\rbrace .
$$

\begin{df}[Canonical Submodule]\label{canonical sub}
If the $A$-submodule $\varphi[\varpi]_{\leq \delta_0}=
\varphi[\varpi](\C_{\wp}) \cap B(\delta_0)$ 
of
 $\varphi[\varpi](\C_{\wp})$ is a free 
$\kappa(\wp)$-module of rank $1$ via 
$\varphi$, it is called the canonical submodule of $\varphi$. 
\end{df}

\begin{df}[Canonical Submodule Scheme]
\label{canonical subsch}
Let $C \subset \varphi[\varpi]$ be an $A$-submodule scheme defined over $\cO$. We call $C$ the canonical submodule scheme of $\varphi$ if the set of $\C_{\wp}$-valued points of $C$ is the canonical submodule of $\varphi$ in the sense of Definition \ref{canonical sub}.
\end{df}
Hattori has already defined the {\itshape{canonical subgroup}}, which is similar to the canonical submodule scheme, for a Drinfend module with ordinary reduction in \cite{H1}.

The following proposition allows us to identify the canonical submodule with the canonical submodule scheme.

\begin{prop}\label{sub iff subsch}
	A Drinfeld module $\varphi$ over $\cO$ admits the canonical submodule if and only if $\varphi$ admits the canonical submodule scheme.
\end{prop}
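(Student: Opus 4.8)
The statement is an equivalence, so there are two directions. One direction is essentially formal: if $\varphi$ admits a canonical submodule scheme $C$, then by Definition~\ref{canonical subsch} the set $C(\C_\wp)$ of $\C_\wp$-valued points \emph{is} the canonical submodule, so in particular the canonical submodule exists. Thus the content is entirely in the reverse direction: given that $\varphi[\varpi]_{\leq\delta_0}$ is a free $\kappa(\wp)$-module of rank $1$, I must produce an $A$-submodule scheme $C\subset\varphi[\varpi]$ over $\cO$ whose $\C_\wp$-points equal $\varphi[\varpi]_{\leq\delta_0}$.

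**Constructing the scheme.** The natural candidate is to take the schematic closure in $\varphi[\varpi]$ (over $\cO$) of the finite set $\varphi[\varpi]_{\leq\delta_0}\subset\varphi[\varpi](\C_\wp)$. Concretely, write $\varphi_\varpi(X)=\sum_{i=0}^{rd}H_iX^{q^i}$ and let $S=\varphi[\varpi]_{\leq\delta_0}=\{\alpha_1,\dots,\alpha_{q^d}\}$ be the canonical submodule; set
\[
g(X)=\prod_{\alpha\in S}(X-\alpha)\in\C_\wp[X].
\]
Because $S$ is a $\kappa(\wp)$-submodule, $g$ is an $\F_q$-linear (additive) polynomial, and since $0\in S$ it has the form $g(X)=\sum_{j=0}^{d} c_j X^{q^j}$ with $c_d=1$. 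The first key point is that $g(X)$ divides $\varphi_\varpi(X)$ \emph{in $\C_\wp[X]$} (its roots are a subset of the roots of $\varphi_\varpi$, and $\varphi_\varpi$ is separable as $H_0=\varpi\neq0$), hence $\varphi_\varpi(X)=g(X)\cdot h(X)$ for the complementary additive polynomial $h$. The second, and main, point is that the coefficients $c_j$ actually lie in $\cO$: here one uses the definition of $\delta_0$ together with the shape of the Newton polygon of $\varphi_\varpi$ — the elements of $S$ are exactly the roots of minimal absolute value $\delta_0$, so $g$ is (up to the leading unit) the factor of $\varphi_\varpi$ corresponding to the first slope segment of the Newton polygon, and Newton-polygon factorization over the complete field $\C_\wp$ together with the fact that $\cO$ is complete and integrally closed in its fraction field (being a complete valuation ring) forces $c_j\in\cO$ and in fact $v(c_j)\geq 0$. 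Granting this, define
\[
C=\Spec\,\cO[X]/(g(X))\subset\varphi[\varpi]=\Spec\,\cO[X]/(\varphi_\varpi(X)).
\]

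**Checking the required properties.** It remains to verify: (i) $C$ is a closed subscheme of $\varphi[\varpi]$ over $\cO$ — this is immediate once $g\mid\varphi_\varpi$ in $\cO[X]$, which follows since $h=\varphi_\varpi/g$ has coefficients in $\mathrm{Frac}(\cO)\cap\cO=\cO$ by the same completeness/integral-closure argument (or by Gauss's lemma over the valuation ring $\cO$); (ii) $C$ is an $A$-submodule scheme, i.e.\ stable under $\varphi_a$ for all $a\in A$ — since $A=\F_q[T]$ it suffices to check stability under $\varphi_T$, and on $\C_\wp$-points $\varphi_T$ preserves $\varphi[\varpi](\C_\wp)$ and preserves the absolute value condition $|\cdot|\leq\delta_0$ because multiplication by $T$ on the $\wp$-torsion, being given by a $\kappa(\wp)$-linear endomorphism of the finitely generated module $\varphi[\varpi](\C_\wp)$, cannot increase valuations beyond... more carefully: $\varphi[\varpi]_{\leq\delta_0}$ is an $A$-submodule of $\varphi[\varpi](\C_\wp)$ by the remark just before Definition~\ref{canonical sub}, and since $C$ is the schematic closure of this $A$-stable set the $A$-action extends by density/flatness; (iii) $C(\C_\wp)=\varphi[\varpi]_{\leq\delta_0}$ — by construction the $\C_\wp$-points of $C$ are the roots of $g$, which are exactly the elements of $S$. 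This gives the canonical submodule scheme and completes the nontrivial direction.

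**Main obstacle.** The crux is step (ii)/(iii) above only superficially; the genuine difficulty is showing $g(X)\in\cO[X]$, equivalently that the first Newton-polygon factor of $\varphi_\varpi$ is defined over $\cO$ and not merely over $\C_\wp$. This is where the hypothesis that $\cO$ is \emph{complete} (and a valuation ring, hence integrally closed) is essential, and it is presumably handled by the Newton-polygon analysis developed in Section~\ref{sec:canonical submodules}; I would isolate it as the one real lemma and cite that section for the factorization of additive polynomials along slopes of the Newton polygon over a complete valued field, noting that the slope-$\!\leq$ part has coefficients with nonnegative valuation precisely because $\delta_0\leq 1$ (as $0\neq\alpha\in\varphi[\varpi](\C_\wp)$ forces $|\alpha|\leq 1$, these being roots of a monic-up-to-unit additive polynomial with coefficients in $\cO$).
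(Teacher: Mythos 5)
Your overall strategy matches the paper's: the easy direction is immediate from Definition~\ref{canonical subsch}, and the content is to show that $\Phi_0(X)=\prod_{\alpha\in\varphi[\varpi]_{\leq\delta_0}}(X-\alpha)$ has coefficients in $\cO$, after which $\Spec\cO[X]/(\Phi_0)$ is the canonical submodule scheme. You also correctly pinpoint this rationality statement as the only real obstacle. Where you diverge from the paper is in \emph{how} you prove $\Phi_0\in\cO[X]$. You appeal to slope factorization along the Newton polygon over the complete field, invoking that $\cO$ is ``a complete valuation ring, hence integrally closed'' and using Gauss's lemma. The paper instead uses a short Galois-descent argument: the set $\varphi[\varpi]_{\leq\delta_0}\subset K^{\mathrm{sep}}$ is stable under $\mathrm{Gal}(K^{\mathrm{sep}}/K)$, because the Galois group permutes the roots of the separable polynomial $\varphi_\varpi(X)\in\cO[X]$ and preserves the valuation, so it preserves the condition $|\alpha|\le\delta_0$; hence $\Phi_0\in K[X]$, and integrality of the roots over $\cO$ then gives $\Phi_0\in\cO[X]$.

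Two caveats on your route. First, the hypothesis on $\cO$ in the paper is only that it is a complete local subring of $\Cwp$ containing $A_\wp$ with maximal ideal $\{x: v(x)>0\}$; it is not assumed to be a valuation ring, so ``Gauss's lemma over the valuation ring $\cO$'' and ``$\cO$ is a complete valuation ring'' overstate the hypotheses, and the general Newton-polygon factorization theorem you want is usually stated for complete \emph{discretely} valued fields or Henselian valuation rings. The Galois-descent argument sidesteps this by producing $\Phi_0\in K[X]$ directly and then only needing integrality. Second, you suggest citing the Newton-polygon analysis ``developed in Section~\ref{sec:canonical submodules}'', but that section appears later and its Theorem~\ref{condition 1} itself cites Proposition~\ref{sub iff subsch}, so leaning on it here would be mildly circular; you would have to reprove or independently cite the slope-factorization input. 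Your extra discussion of the $A$-submodule scheme structure in step (ii) is correct in spirit (it follows because $\Phi_0$ divides $\varphi_\varpi$ and the zero set is $\varphi_a$-stable, both schemes being reduced), though the paper treats this as immediate and does not spell it out.
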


\begin{proof}
	Suppose $\varphi$ admits the canonical submodule $\varphi[\varpi]_{\leq \delta_0}$. It suffices to prove that 
	$$
	\Phi_{0}(X)=\prod_{\alpha \in \varphi[\varpi]_{\leq \delta_0}}(X-\alpha)
	$$
	belongs to $\cO[X]$. 
	Let $K$ be the field of fractions of $\cO$ and 
	$K^{\mathrm{sep}}$ be a separable closure of $K$.  
	Note that the elements of 
	$\varphi[\varpi]_{\leq \delta_0} \subset K^{\mathrm{sep}}$ 
	are Galois conjugates of one another,
	since each $\alpha \in \varphi[\varpi]_{\leq \delta_0}$ 
	is a root of the separable polynomial 
	${\vp}_{\varpi}(X) \in \cO[X]$ and 
	the valuation is invariant 
	under the Galois action. 
	Thus we have $\Phi_{0}(X) \in \cO[X]$. The converse is clear.
\end{proof}

\begin{rem}
	The canonical submodule scheme is a lift of $q^d$-th Frobenius on 
$E \mod \mathfrak{m}=\mathbb{G}_{a, \cO/\mathfrak{m}}$, that is, one has
	$$
	\Phi_{0}(X) \equiv X^{q^d} \mod \mathfrak{m}.
	$$
This is because we have $\Phi_{0}(X) \in \cO[X]$ (Proposition \ref{sub iff subsch}) 
and $\left| \alpha \right| <1$ 
for any $\alpha \in \varphi[\varpi]_{\leq \delta_0}$.
\end{rem}

In Section 3, we give a necessary and sufficient condition for the existence of the canonical submodule 
(Theorem~\ref{condition 1}) in terms of a Newton polygon.

\subsection{Statement of the main theorem}
\label{sec:definition of phi}
Recall that $A_{\wp}$ contains a subfield $\F$ which is isomorphic to $\kappa(\wp)=A/\wp$. 
In this subsection, 
we construct certain $\F$-module schemes.

\begin{df}
For an element $c \in \cO$, we define a closed subgroup scheme of $\mathbb{G}_{a,\cO}$ as
$$
Z(c)=\Spec  \cO[X]/(X^{q^d}+cX).
$$
\end{df}
We define an $\F$-module scheme structure of $Z(c)$,
i.e., a ring homomorphism $\psi: \F \to \End(Z(c))$
as follows.   Each $\lambda \in \F$ is sent to 
the morphism of schemes induced by the 
$\cO$-algebra homomorphism 
$\cO[X]/(X^{q^d}+cX) \to \cO[X]/(X^{q^d}+cX)$ that sends $X$ 
to $\lambda X$.

We often write $(X, f)$ for an abelian group scheme $X$ and
a ring homomorphism $f: R \to \End(X)$ to
denote the group scheme with $R$-module structure.

We define an $A$-module structure $\phi$ 
on $Z(c)$ as follows.
We set 
$\phi: A \to A_\wp \to A_\wp/\wp A_\wp \to \F \to \End(Z(c))$.
Here, the first is the canonical inclusion, the second is the canonical quotient map,
the third is $\iota^{-1}$ and the last is $\psi$.

We are ready to state our main result.
\begin{thm}\label{main thm}
	Suppose a Drinfeld module $\vp$ over $\cO$ admits the canonical submodule scheme.  As an $A$-module scheme, the canonical submodule scheme is isomorphic to 
$$
(Z({\varpi/H_d}), \phi).
$$
\end{thm}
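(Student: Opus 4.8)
The plan is to describe the canonical submodule scheme concretely via its defining polynomial $\Phi_0(X)=\prod_{\alpha\in\vp[\varpi]_{\le\delta_0}}(X-\alpha)$, which lies in $\cO[X]$ by Proposition~\ref{sub iff subsch}, and to show it has exactly the shape $X^{q^d}+(\varpi/H_d)X$. First I would record the basic numerics: the canonical submodule is a free $\kappa(\wp)$-module of rank $1$, hence has $q^d$ elements, so $\Phi_0$ is monic of degree $q^d$; since the canonical submodule is an $A$-submodule (indeed $\F$-submodule) of $\vp[\varpi](\C_\wp)$ and $0$ is its only element of valuation $>\delta_0$ fixed appropriately, $\Phi_0$ is an $\F_q$-linearized polynomial, i.e. $\Phi_0(X)=\sum_{i=0}^{d}a_iX^{q^i}$ with $a_d=1$. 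The real content is to identify $a_0$ and to show $a_1=\dots=a_{d-1}=0$; the claim $a_0=\varpi/H_d$ and the vanishing of the middle coefficients should both come from a Newton polygon analysis of $\vp_\varpi(X)$ together with the factorization $\vp_\varpi(X)=\Phi_0(X)\cdot\Psi(X)$ in $\cO[X]$, where $\Psi$ is the cofactor cutting out the complementary roots (those of valuation exactly $\delta_0'$, the next slope).

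The key steps, in order: (1) Set up the Newton polygon of $\vp_\varpi(X)=\sum_{i=0}^{rd}H_iX^{q^i}$; since $H_0=\varpi$ has $v(\varpi)=d$ and $H_{rd}\in\cO^\times$, and since the existence of the canonical submodule forces (by Theorem~\ref{condition 1}) a specific break in the polygon, deduce that the first $q^d$ roots (counted with the $q$-power indexing) have common valuation $\delta_0 = d/(q^d-1) - (\text{something involving } v(H_d))$; more precisely the relevant vertex of the Newton polygon sits at the point with abscissa $q^d$, and the slope from $(1,\infty)$-normalized data involves $v(H_d)$. (2) From $\Phi_0\mid\vp_\varpi$ in $\cO[X]$ and the fact that $\Phi_0$ is $\F_q$-linear of degree $q^d$, match the lowest-degree terms: the coefficient of $X^{q^0}=X$ on both sides gives $a_0\cdot(\text{constant term of }\Psi) = H_1$-type relation, but more cleanly, compare the constant-term-after-dividing-by-$X$ data, i.e. $\vp_\varpi(X)/X$ at the level of its Newton polygon, to extract $v(a_0)=d-v(H_d)$ and then pin down $a_0=\varpi/H_d$ exactly by a leading-coefficient comparison together with the linearity of $\Phi_0$. (3) Show the intermediate coefficients $a_1,\dots,a_{d-1}$ vanish: since all nonzero roots $\alpha$ of $\Phi_0$ have the same valuation $\delta_0$ and $\Phi_0$ is $\F_q$-linear, the Newton polygon of $\Phi_0$ is a single segment, which forces all $a_i$ with $0<i<d$ to have valuation strictly above the line joining $(0,v(a_0))$ and $(d,0)$; then a reduction-mod-$\mathfrak{m}$ argument (using $\Phi_0(X)\equiv X^{q^d}$, from the Remark) combined with the $\F$-module structure — the action of $\psi(\lambda)$ commuting with $\Phi_0$ — actually kills them identically. (4) Finally, identify the $A$-module structure: the $\kappa(\wp)=\F$ action on the canonical submodule is induced from $\vp$, which factors through $A\to A_\wp\to A_\wp/\wp A_\wp\xrightarrow{\iota^{-1}}\F$, and on $Z(\varpi/H_d)$ this is by definition $\phi$; so the evident map $X\mapsto X$ is an isomorphism of $A$-module schemes.

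The main obstacle is step (3), the exact vanishing of the intermediate coefficients together with getting the constant term to be $\varpi/H_d$ on the nose rather than just up to a unit. Newton-polygon reasoning gives the valuations, but pinning down the exact values requires exploiting the $\F_q$-linearity of $\Phi_0$ and the compatibility with the $\psi$-action more carefully — essentially, one needs that an $\F_q$-linear polynomial of degree $q^d$ all of whose nonzero roots form a one-dimensional $\F_{q^d}$-vector space (via $\vp$, hence the $\F$-action) and which reduces to $X^{q^d}$ mod $\mathfrak{m}$ must be of the form $X^{q^d}+cX$; this is where the $d\ge 1$ and the structure of $\F\cong\kappa(\wp)$ as living inside $A_\wp$ genuinely enter. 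I would handle this by passing to the formal Drinfeld module of Rosen (as the abstract advertises), where $\Phi_0$ arises as the characteristic polynomial of a well-understood operator, making the linearization and the value of $c$ transparent; the bridge from the formal picture back to $\cO[X]$ is then routine.
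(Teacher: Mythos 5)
There is a genuine gap, and it sits exactly where you flagged your ``main obstacle'': steps (2)--(4) as written aim to prove that $\Phi_0(X)$ \emph{equals} $X^{q^d}+(\varpi/H_d)X$ and that $X\mapsto X$ is the isomorphism. That is stronger than the theorem and false in general. The nonzero roots of $\Phi_0$ form a rank-one $\F$-module only for the action $\lambda\cdot x=\widehat{\vp}_\lambda(x)$, where $\widehat{\vp}_\lambda(X)=\lambda X+O(X^{q})$ is a genuine power series for $\lambda\in\F\subset A_\wp$; they are \emph{not} the set $\{\lambda\alpha:\lambda\in\F\}$ under scalar multiplication. Consequently $\Phi_0$ is an $\F_q$-linearized polynomial $\sum_{i=0}^d a_iX^{q^i}$, but the intermediate coefficients $a_1,\dots,a_{d-1}$ have no reason to vanish, and $a_0$ agrees with $\varpi/H_d$ only in valuation (indeed only up to a $1$-unit). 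Your proposed rescue --- ``reduces to $X^{q^d}$ mod $\mathfrak m$ plus commutes with the $\F$-action forces the shape $X^{q^d}+cX$'' --- fails because the $\F$-action on the original coordinate is not by scalars, so commuting with it imposes no such constraint; and the divisibility $\vp_\varpi=\Psi\circ\Phi_0$ pins down $a_0\cdot(\text{linear coefficient of }\Psi)=\varpi$ but does not identify that linear coefficient with $H_d$.

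The missing idea is the explicit change of coordinates that the paper builds: the averaging operator $g(X)=\sum_{k=1}^{q^d-1}\sigma^{-k}\widehat{\vp}_{\sigma^k}(X)$, which conjugates the formal $\F$-action to literal scalar multiplication, $(\widehat{\vp}_\lambda)^g(X)=\lambda X$. Only in the $g$-coordinate do the first-slope roots form an honest $\F$-line, so that the product collapses to $X^{q^d}-\alpha^{q^d-1}X$, and the relation $(\widehat{\vp}_\varpi)^g(\alpha)=0$ identifies $-\alpha^{q^d-1}$ as $\varpi/((1+m)\widehat H_d)$ with $m\in\mathfrak m$ (Lemma~\ref{lem1}); one then needs Lemma~\ref{1-unit} to replace $\widehat H_d$ by $H_d$, Hensel's lemma to extract $u$ with $u^{q^d-1}=1+m$ and rescale the constant to exactly $\varpi/H_d$, and the distinguished-polynomial Lemma~\ref{lemma dist} to pass back from $\cO[[X]]$ to $\cO[X]$. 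So the isomorphism of the theorem is $X\mapsto u\,g(X)$, not the identity, and the final matching of $A$-module structures (Lemma~\ref{key lem 2}) is the comparison of $u\vp^gu^{-1}$ with $\phi$, not the triviality your step (4) suggests. Your closing appeal to Rosen's formal Drinfeld modules points in the right direction, but without the linearizing operator $g$ and the unit normalization the argument does not close.
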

The proof will be given in Section~\ref{sec:proof}.

\section{Existence of canonical submodules}
\label{sec:canonical submodules}
The aim is to give a criterion for 
the existence of a canonical submodule 
(Theorem~\ref{condition 1}).
Let $\varphi$ be a Drinfeld module over $\mathcal{O}$ of rank $r\geq 2$. 
We consider the formal Drinfeld module 
(in the sense of Rosen \cite{R}) 
associated with $\varphi$ 
and describe the Newton polygon of
$$
\varphi_{\varpi}(X)=\sum_{i=0}^{rd}H_i X^{q^i}.
$$

\subsection{Formal Drinfeld modules}
In this subsection, we state some basic facts on formal Drinfeld modules.  See \cite{R} for more details. In \cite{R}, Rosen defined formal Drinfeld modules as an analogue of formal groups associated to elliptic curves. For a Drinfeld module $\vp$ over $\cO$, one may define a formal Drinfeld module associated to $\vp$ in the same way as \cite{R} $\S 4$. We denote the formal Drinfeld module by $\widehat{\vp}$. 

We let $\cO[[\tau]]$ denote the ring of skew power series
with $a^q \tau= \tau a$ for $a \in \cO$.   
It is regarded as 
a subring of $\cO[[X]]$ (a ring by composition) by sending $\tau$ to $X^q$.
Then we have the following.

\begin{itemize}
	\item $\widehat{\vp} : A_{\wp} \to \cO[[\tau]]$ 
is a ring homomorphism, which defines an $A_{\wp}$-module 
structure on $\mathfrak{m}$. Note that 
that $\widehat{\vp}_a(X)$ is of the form
	 $$
	 \widehat{\vp}_a(X)=\sum_{i\geq 0}a_iX^{q^i} \in \cO[[X]]
	 $$
	 for $a \in A_{\wp}$. Moreover we have $a_0=a$.

	\item $\widehat{\vp}$ induces the $A_{\wp}$-module structure on $\mathfrak{m}$ given by
	
	$$
	a \bullet x=\widehat{\vp}_a(x)=\sum_{i\geq 0}a_ix^{q^i} \in \mathfrak{m} \ 
	$$
	for $a\in A_{\wp}, x\in \mathfrak{m}$
	
	\item $\widehat{\vp}$ coincides with $\vp$ on $A$. In particular, we have  $\widehat{\vp}_{\varpi}(X)=\vp_{\varpi}(X) \in \cO[X].$\ 
\end{itemize}

\subsection{Change of variables}

In this subsection, we make a change of variables of $\widehat{\mathbb{G}}_a$ by using a power series $g(X) \in \cO[[X]]$ in order to give a simpler description of the $\varpi$-torsion points of $\vp$.  

Recall that $\F \subset A_{\wp}$.   
We fix a generator $\sigma $ of the cyclic group 
$\F^{\times}$.
\begin{df}
	We define a power series $g(X) \in \cO[[X]]$ as follows:
	$$
	g(X)=\sum_{k=1}^{q^d-1}\sigma^{-k} \widehat{\vp}_{\sigma^k}(X).
	$$
\end{df}

\begin{rem}
	Since we have $\widehat{\vp}_{\sigma^k}(X)=\sigma^kX+O(X^2)$, we see that $g(X) \in \cO[[X]]$ is of the form $g(X)=-X+O(X^2)$. Thus there exists its composition inverse $g^{-1}(X) \in \cO[[X]]$.  
\end{rem}

\begin{df}
	For any $a \in A_{\wp}$, we set 
	$$
	(\widehat{\vp}_{a})^{g}=(\widehat{\vp}_{a})^{g}(X)=g(\widehat{\vp}_{a}(g^{-1}(X))).
	$$
\end{df}

\begin{rem}
\label{rmk:F-module structure}
	Since $g$ is additive, 
the map 
$(\widehat{\vp})^{g}: 
A_{\wp} \to \cO[[X]]$
that sends $a$ to $(\widehat{\vp}_{a})^{g}$ 
is a ring homomorphism. 
This map induces the $A_{\wp}$-module structure on $\mathfrak{m}$ given by
	
	$$
	a*x=(\widehat{\vp}_{a})^{g}(x) \in \mathfrak{m} \ 
	$$
	for $a\in A_{\wp}$ and $x\in \mathfrak{m}.$ Moreover, we have 
	$$
	\lambda*x=(\widehat{\vp}_{\lambda})^{g}(X)=\lambda X
	$$
	for any $\lambda \in \F$. 
\end{rem}

\begin{lem}\label{multi of d}
	The power series $(\widehat{\vp}_{\varpi})^{g}(X)$ is of the form
	$$
	(\widehat{\vp}_{\varpi})^{g}(X)=\sum_{i\geq 0}\widehat{H}_{id}X^{q^{id}} \ 
	$$
	for some $\widehat{H}_{id} \in \cO$ for each $i$.
\end{lem}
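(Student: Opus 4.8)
The plan is to exploit the $\F$-module structure on $\mathfrak{m}$ coming from $(\widehat{\vp})^g$ described in Remark~\ref{rmk:F-module structure}, together with the fact that $\vp_\varpi$ (hence $(\widehat{\vp}_\varpi)^g$, after the change of variables) commutes with the $\F$-action. Concretely, write
$$
(\widehat{\vp}_{\varpi})^{g}(X)=\sum_{j\geq 0}\widehat{H}_{j}X^{q^{j}}\in\cO[[X]],
$$
and I want to show $\widehat{H}_j=0$ unless $d\mid j$. The key input is that $\varpi$ maps into the central subfield $\F\subset A_\wp$ under the composite $A_\wp\to A_\wp/\wp A_\wp\xrightarrow{\iota^{-1}}\F$; more to the point, for every $\lambda\in\F$ the ring homomorphism property gives $(\widehat{\vp}_{\varpi})^g\circ(\widehat{\vp}_\lambda)^g=(\widehat{\vp}_{\varpi\lambda})^g=(\widehat{\vp}_{\lambda\varpi})^g=(\widehat{\vp}_\lambda)^g\circ(\widehat{\vp}_{\varpi})^g$, since $A_\wp$ is commutative. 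By the last displayed identity of Remark~\ref{rmk:F-module structure}, $(\widehat{\vp}_\lambda)^g(X)=\lambda X$. Therefore
$$
(\widehat{\vp}_{\varpi})^g(\lambda X)=\lambda\,(\widehat{\vp}_{\varpi})^g(X)
$$
as power series, for every $\lambda\in\F^\times$.

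Next I would compare coefficients in this functional equation. The left-hand side is $\sum_j \widehat{H}_j\lambda^{q^j}X^{q^j}$ and the right-hand side is $\sum_j \lambda\,\widehat{H}_j X^{q^j}$, so for each $j$ we get $\widehat{H}_j(\lambda^{q^j}-\lambda)=0$ in $\cO$. Since $\cO$ is a domain (it is a subring of $\C_\wp$), for any $j$ with $\widehat{H}_j\neq 0$ we must have $\lambda^{q^j}=\lambda$ for all $\lambda\in\F$. As $\F\cong\kappa(\wp)=\F_{q^d}$ has $q^d$ elements and $\F^\times$ is cyclic of order $q^d-1$, this forces $q^d-1\mid q^j-1$, i.e. $d\mid j$. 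Hence $\widehat{H}_j=0$ whenever $d\nmid j$, which is exactly the claim with $\widehat{H}_{id}$ denoting the surviving coefficients.

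The only genuinely delicate point is making sure the functional equation $(\widehat{\vp}_{\varpi})^g(\lambda X)=\lambda (\widehat{\vp}_{\varpi})^g(X)$ is legitimate as an identity of formal power series (and that the coefficient comparison is valid coefficientwise), rather than merely as an identity of functions on $\mathfrak{m}$. This follows because $(\widehat{\vp})^g$ is a ring homomorphism $A_\wp\to\cO[[X]]$ into the composition ring of power series (Remark~\ref{rmk:F-module structure}), so all the identities above already hold in $\cO[[X]]$; substituting $\lambda X$ for $X$ is a well-defined continuous $\cO$-algebra endomorphism of $\cO[[X]]$, so no convergence issues arise. A secondary routine check is that $(\widehat{\vp}_\varpi)^g$ indeed has coefficients in $\cO$ and not merely in $\C_\wp$: this is because $g, g^{-1}\in\cO[[X]]$ and $\widehat{\vp}_\varpi=\vp_\varpi\in\cO[X]$, so the composite lies in $\cO[[X]]$. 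I expect the coefficient-comparison step to be entirely mechanical once the power-series identity is in hand, so the "main obstacle" is really just the bookkeeping of working in $\cO[[\tau]]\subset\cO[[X]]$ correctly.
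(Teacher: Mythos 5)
Your proof is correct and follows essentially the same route as the paper: establish the $\F$-linearity $(\widehat{\vp}_{\varpi})^g(\lambda X)=\lambda(\widehat{\vp}_{\varpi})^g(X)$ from $(\widehat{\vp}_{\lambda})^{g}(X)=\lambda X$ and the commutativity $\widehat{\vp}_{\lambda}\circ\widehat{\vp}_{\varpi} = \widehat{\vp}_{\varpi}\circ\widehat{\vp}_{\lambda}$, then conclude from ${}^{\sharp}\F=q^d$ that the only surviving coefficients are those with $d\mid j$. You merely spell out the coefficient comparison that the paper leaves implicit.
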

\begin{proof}
Recall that we have 
$\widehat{\vp}_{\varpi}(X)=
\vp_{\varpi}(X) \in \cO[X].$
The 
$\F$-linearity of $(\widehat{\vp}_{\varpi})^{g}(X)$,
i.e.,
$(\widehat{\varphi}_\varpi)^g(\lambda X)=
\lambda(\widehat{\varphi}_\varpi)^g(X)$
for $\lambda \in \F$,
follows from $(\widehat{\vp}_{\lambda})^{g}(X)=\lambda X
	$ and $\widehat{\vp}_{\lambda}(\widehat{\vp}_{\varpi}(X)) = \widehat{\vp}_{\varpi}(\widehat{\vp}_{\lambda}(X))$ for any $\lambda \in \F$. The assertion follows from the $\F$-linearity and ${}^{\sharp}\F=q^d$. 
	
\end{proof}

\subsection{Newton polygons}
We compare the Newton polygon of
$$
\varphi_{\varpi}(X)=\sum_{i=0}^{rd}H_i X^{q^i}
\ \text{and of }\ 
(\widehat{\vp}_{\varpi})^{g}(X)=\sum_{i\geq 0}\widehat{H}_{id}X^{q^{id}} .
$$
\begin{lem}\label{NP invariant}
	The negative-slope part of the Newton polygon of $\vp_{\varpi}(X)$ coincides with that of $(\widehat{\vp}_{\varpi})^{g}(X)$. In particular, the set of vertices of the Newton polygon of $\vp_{\varpi}(X)$ is contained in the set $\left\lbrace (q^{id},v(H_{id}))\right\rbrace _{0 \le i \le r}$.
\end{lem}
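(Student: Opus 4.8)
The plan is to use the change of variables $g$ to transport the $\varpi$-torsion points of $\vp$ lying in the open unit disk onto the zeros of $(\widehat{\vp}_\varpi)^g$ in that disk, to observe that this transport preserves valuations, and then to invoke the standard dictionary between Newton polygons and valuations of roots. Write $\mathfrak{m}_{\C_\wp}=\{x\in\C_\wp\mid v(x)>0\}$. Since $g(X)=-X+O(X^2)$ and hence $g^{-1}(X)=-X+O(X^2)$ both lie in $\cO[[X]]$, both series converge on $\mathfrak{m}_{\C_\wp}$, carry it into itself, are mutually inverse on it, and satisfy $v(g(x))=v(x)=v(g^{-1}(x))$ for every $x\in\mathfrak{m}_{\C_\wp}$ because the linear term dominates. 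The polynomial $\vp_\varpi$ also maps $\mathfrak{m}_{\C_\wp}$ into itself, since $H_0=\varpi$ with $v(\varpi)=d>0$ and $v(H_i)\ge0$ for all $i$. Consequently, for $x\in\mathfrak{m}_{\C_\wp}$ the identity $(\widehat{\vp}_\varpi)^g(x)=g\big(\vp_\varpi(g^{-1}(x))\big)$ holds as an equality of convergent sums (continuity of power series operations, together with $\widehat{\vp}_\varpi=\vp_\varpi$), and since $g$ is a valuation-preserving bijection of $\mathfrak{m}_{\C_\wp}$ fixing $0$, we get $(\widehat{\vp}_\varpi)^g(x)=0$ if and only if $g^{-1}(x)\in\vp[\varpi](\C_\wp)\cap\mathfrak{m}_{\C_\wp}$. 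Thus $y\mapsto g(y)$ is a valuation-preserving bijection from $\vp[\varpi](\C_\wp)\cap\mathfrak{m}_{\C_\wp}$ onto the zero set of $(\widehat{\vp}_\varpi)^g$ in $\mathfrak{m}_{\C_\wp}$; in particular the latter set is finite, and (as $\vp_\varpi$ is separable and $g$ is a bijective change of coordinate) all these zeros are simple on both sides, so the two sets carry the same multiset of valuations.

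Next I would read off Newton polygons. The Newton polygon of $\vp_\varpi(X)=\sum_{i=0}^{rd}H_iX^{q^i}$ is the lower convex hull of $\{(q^i,v(H_i))\}_{0\le i\le rd}$, and by Lemma~\ref{multi of d} (together with a one-line computation of linear coefficients showing $\widehat{H}_0=\varpi$) the Newton polygon of $(\widehat{\vp}_\varpi)^g(X)=\sum_{i\ge0}\widehat{H}_{id}X^{q^{id}}$ is the lower convex hull of $\{(q^{id},v(\widehat{H}_{id}))\}_{i\ge0}$. In each case the negative-slope part is a finite broken line whose slopes, with multiplicities recorded as horizontal lengths, are exactly the negatives of the valuations of the roots lying in $\mathfrak{m}_{\C_\wp}$ — finiteness of this part for the power series $(\widehat{\vp}_\varpi)^g$ being guaranteed by the finiteness established above. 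Since the two multisets of such valuations agree, the two negative-slope parts agree.

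For the ``in particular'' clause, observe that the Newton polygon $N$ of $\vp_\varpi$ runs from its leftmost vertex $(q^0,v(H_0))=(1,d)$ to its rightmost vertex $(q^{rd},v(H_{rd}))=(q^{rd},0)$ (using $H_{rd}\in\cO^{\times}$) and stays weakly above the horizontal axis because every $H_i\in\cO$. By convexity its slopes are non-decreasing; since $N$ starts strictly above the axis, ends on the axis, and never dips below it, every slope of $N$ is $\le0$, so $N$ is its negative-slope part followed by at most one slope-$0$ segment lying on the axis (which contributes no new vertex). By the previous step the negative-slope part of $N$ coincides with that of the Newton polygon of $(\widehat{\vp}_\varpi)^g$, whose vertices lie among the points $(q^{id},v(\widehat{H}_{id}))$; being simultaneously vertices of $N$, they have the form $(q^j,v(H_j))$, and $q^j=q^{id}$ forces $j=id$, while $q^{id}\le q^{rd}$ forces $i\le r$. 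Together with the rightmost vertex $(q^{rd},v(H_{rd}))$, which equals $(q^{id},v(H_{id}))$ for $i=r$, this shows that every vertex of $N$ lies in $\{(q^{id},v(H_{id}))\}_{0\le i\le r}$.

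The step I expect to be the main obstacle is making the first paragraph rigorous: because $g$ is only a power series and $(\widehat{\vp}_\varpi)^g$ is only a power series (not a polynomial like $\vp_\varpi$), one must supply genuine convergence estimates on $\mathfrak{m}_{\C_\wp}$ in order to identify — and to equate valuation by valuation — the zero set of the power series $(\widehat{\vp}_\varpi)^g$ in the open disk with the set of positive-valuation $\varpi$-torsion points of $\vp$, and to know a priori that this zero set is finite so that ``the negative-slope part of its Newton polygon'' is meaningful.
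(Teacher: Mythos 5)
Your argument is essentially the paper's: both proofs use that $g$ (being of the form $-X+O(X^2)$ with coefficients in $\cO$) gives a bijection between the small-valuation $\varpi$-torsion points of $\vp$ and the zeros of $(\widehat{\vp}_\varpi)^g$ in the open unit ball, then read off the negative-slope part of the Newton polygon from the multiset of root valuations. The only difference is one of rigor: you correctly restrict the bijection to $\mathfrak{m}_{\C_\wp}$ (where the power series actually converge), explicitly note that $g$ preserves valuations, and spell out why finiteness and $\widehat{H}_0=\varpi$ are needed, all of which the paper's terse proof leaves implicit.
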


\begin{proof}
	Note that we have a bijection
	$$
	g: \left\lbrace \alpha \in \C_{\wp} \mid \vp_{\varpi}(\alpha)=0\right\rbrace 
	\to
	\left\lbrace \beta \in \C_{\wp} \mid 	(\widehat{\vp}_{\varpi})^{g}(\beta)=0\right\rbrace 
	,\ 
	(\alpha \mapsto g(\alpha))
	.$$
	Moreover the bijection preserves the order of zero, that is,
	$\alpha \in  \C_{\wp}$ is a zero of $\vp_{\varpi}(X)$ of order $n \geq 1$ if and only if $g(\alpha) \in  \C_{\wp} $ is a zero of $(\widehat{\vp}_{\varpi})^{g}(X)$ of order $n \geq 1$. This implies the assertion since we have $H_0=\widehat{H}_0=\varpi$.
	
\end{proof}

\subsection{The criterion}
As an easy application of Lemma~\ref{NP invariant}, we give a necessary and sufficient condition for the existence of the canonical submodule (Definition \ref{canonical sub}) in terms of the Newton polygon of $\vp_{\varpi}(X)$.
\begin{thm}\label{condition 1}
A Drinfeld module $\varphi$ over $\mathcal{O}$ admits the canonical submodule scheme 
if and only if $(q^d,v(H_d))$ is a vertex of the Newton polygon of $\varphi_{\varpi}(X)$. 
\end{thm}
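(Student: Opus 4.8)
The plan is to characterize the canonical submodule directly in terms of the Newton polygon of $\vp_\varpi(X)$, using Lemma~\ref{NP invariant} to reduce to the $\F$-linear power series $(\widehat{\vp}_\varpi)^g(X)=\sum_{i\ge 0}\widehat{H}_{id}X^{q^{id}}$. Recall that $\vp_\varpi(X)$ has $q^{rd}$ roots in $\C_\wp$ (counted with multiplicity), all simple since $\vp_\varpi$ is separable, and that the multiset of valuations of the nonzero roots is read off from the negative-slope part of the Newton polygon: a segment of slope $-s$ and horizontal length $\ell$ contributes $\ell$ roots of valuation $s$. The quantity $\delta_0$ from the setup satisfies $v(\alpha)$ maximal ($|\alpha|$ minimal, hence $\delta_0=q^{-\text{(largest slope magnitude)}}$... ) — more precisely, $-\log_q\delta_0$ equals the largest slope magnitude, i.e.\ the slope of the leftmost (steepest) segment of the Newton polygon, since $v(\alpha)$ large means $|\alpha|$ small.

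First I would translate Definition~\ref{canonical sub} into Newton-polygon language. The submodule $\vp[\varpi]_{\le\delta_0}$ consists of $0$ together with all nonzero $\varpi$-torsion points of minimal absolute value, i.e.\ of valuation exactly $v_0 := -\log_q\delta_0$, the magnitude of the steepest slope. Its cardinality is $1$ plus the horizontal length of the leftmost segment. For this to be a free $\kappa(\wp)$-module of rank $1$ we need cardinality exactly $q^d$, equivalently the leftmost segment has horizontal length exactly $q^d$. (One must also check $\vp[\varpi]_{\le\delta_0}$ really is a $\kappa(\wp)$-submodule of cardinality $q^d$ and not merely a set of size $q^d$ — but this is immediate from the remark before Definition~\ref{canonical sub} that $\vp[\varpi](\C_\wp)\cap B(\delta)$ is always a $\kappa(\wp)$-submodule, and a $\kappa(\wp)$-submodule of cardinality $q^d$ is free of rank $1$.) Next I would use Lemma~\ref{NP invariant}: the vertices of the Newton polygon of $\vp_\varpi$ lie among $\{(q^{id},v(H_{id}))\}_{0\le i\le r}$, and the $x$-coordinates of vertices are powers $q^{id}$. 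Hence the leftmost segment runs from $(1,v(\varpi))=(1,d)$ — note $v(H_0)=v(\varpi)=d$ — to the first vertex, whose $x$-coordinate is some $q^{md}$ with $m\ge 1$; its horizontal length is $q^{md}-1$. This equals $q^d$ exactly when $m=1$, i.e.\ exactly when $(q^d,v(H_d))$ is itself a vertex of the polygon. That is the desired equivalence.

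The two directions then go as follows. If $(q^d,v(H_d))$ is a vertex, the leftmost segment is $[(1,d),(q^d,v(H_d))]$, of length $q^d$, so $\vp[\varpi]_{\le\delta_0}$ has $q^d$ elements and is the canonical submodule; by Proposition~\ref{sub iff subsch} the canonical submodule scheme exists. Conversely, if $(q^d,v(H_d))$ is not a vertex, then since all vertices have $x$-coordinates among $1,q^d,q^{2d},\dots$, the first vertex after $(1,d)$ has $x$-coordinate $q^{md}$ with $m\ge 2$, so the leftmost segment has length $q^{md}-1 > q^d-1$; thus $\vp[\varpi]_{\le\delta_0}$ has more than $q^d$ elements, hence cannot be free of rank $1$ over $\kappa(\wp)$ (it would contain a $\kappa(\wp)$-submodule of size $q^{md}-1+1$... actually its size $q^{md}$ exceeds $q^d$), and the canonical submodule does not exist.

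The main obstacle I anticipate is the careful bookkeeping linking $\delta_0$, the steepest slope, and the \emph{leftmost} segment of the Newton polygon, together with making sure that ``leftmost segment has horizontal length $q^d$'' is genuinely equivalent to ``$(q^d,v(H_d))$ is a vertex'' — this relies crucially on Lemma~\ref{NP invariant} forcing all vertex $x$-coordinates to be of the form $q^{id}$, so that the first vertex cannot have $x$-coordinate strictly between $1$ and $q^d$. A secondary subtlety is confirming that the lower convex hull actually passes through $(1,d)$ as its leftmost point (i.e.\ that the constant term's contribution is the coefficient $H_0=\varpi$), which is exactly the normalization $H_0=\varpi$ recorded in the setup.
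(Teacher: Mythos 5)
Your proof is correct and follows essentially the same route as the paper: both use Lemma~\ref{NP invariant} to constrain the vertex $x$-coordinates to the set $\{q^{id}\}$, identify the canonical submodule with $\{0\}$ together with the roots on the steepest (leftmost) segment, note that a $\kappa(\wp)$-submodule of size $q^{md}$ is free of rank $1$ iff $m=1$, and then invoke Proposition~\ref{sub iff subsch} to pass between the submodule and the submodule scheme. (One small bookkeeping slip: you twice write that the leftmost segment has ``horizontal length exactly $q^d$,'' but cardinality $q^d$ corresponds to horizontal length $q^d-1$; your subsequent computation with $q^{md}$ and the endpoint $x$-coordinate is consistent with the correct count, so the conclusion is unaffected --- and incidentally the paper's own proof writes the left endpoint as $(1,1)$ where, with the stated normalization $v(\varpi)=d$, it should be $(1,d)$ as you have it.)
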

\begin{proof}
Lemma~\ref{NP invariant} implies that 
the first segment of the Newton polygon of 
$\varphi_{\varpi}(X)$ 
is the segment with endpoints 
$(1,1)$ and $(q^d,v(H_d))$ 
if and only if 
$(q^d,v(H_d))$ is a vertex 
of the Newton polygon of $\varphi_{\varpi}(X)$. 
Hence the statement for canonical submodule follows from
the property of Newton polygons.
The assertion for canonical submodule scheme forllows 
from Proposition~\ref{sub iff subsch}. 
\end{proof}
We use the following lemma in the last section.
\begin{lem}\label{1-unit}
	Let $(q^{id},v(H_{id}))$ be a vertex of a segment of the Newton polygon of $\vp_{\varpi}(X)$. Suppose the slope of the segment is negative. Then the fraction ${H_{id}}/{\widehat{H}_{id}}$ is a $1$-unit, that is, we have
	$$
	\frac{H_{id}}{\widehat{H}_{id}}-1 \in \mathfrak{m}.	
	$$	
\end{lem}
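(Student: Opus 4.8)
The plan is to compare the two power series $\vp_\varpi(X)=\sum_i H_iX^{q^i}$ and $(\widehat\vp_\varpi)^g(X)=\sum_i\widehat H_{id}X^{q^{id}}$ directly through the change of variables $g$, tracking which coefficient of the composite $g(\vp_\varpi(g^{-1}(X)))$ the term $\widehat H_{id}X^{q^{id}}$ receives its leading contribution from. Since $g(X)=-X+O(X^2)$ and likewise $g^{-1}(X)=-X+O(X^2)$, and since all three series $g$, $g^{-1}$, $\vp_\varpi$ have $\cO$-coefficients, the composite has $\cO$-coefficients, and one sees that $\widehat H_{id}\equiv \pm H_{id}\cdot(\text{product of leading signs})\pmod{\text{higher-order interaction terms}}$; the point is to show those interaction terms lie in $v(H_{id})+\mathfrak m$, i.e. they have strictly larger valuation than $H_{id}$, so that $\widehat H_{id}/H_{id}\equiv\pm 1$. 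Getting the sign right (it should be $+1$, presumably because the leading signs of $g$ and $g^{-1}$ cancel in the relevant monomials, or because of the precise shape of $g$ as a sum over $\F^\times$) is a bookkeeping matter I would settle by examining the $q^{id}$-homogeneous part carefully.

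The cleaner route, and the one I would actually write up, avoids coefficient chasing and uses Newton polygons together with Lemma~\ref{NP invariant}. By hypothesis $(q^{id},v(H_{id}))$ is a vertex of a negative-slope segment of the Newton polygon of $\vp_\varpi(X)$; by Lemma~\ref{NP invariant} the negative-slope part of the Newton polygon of $(\widehat\vp_\varpi)^g(X)$ is the same polygon, so $(q^{id},v(\widehat H_{id}))$ is the same vertex and in particular $v(H_{id})=v(\widehat H_{id})$, hence $H_{id}/\widehat H_{id}\in\cO^\times$. So the content of the lemma is really the congruence modulo $\mathfrak m$, i.e. that this unit reduces to $1$ in the residue field, not merely that it is a unit.

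For the residue-field statement, reduce everything modulo $\mathfrak m$. Write $\bar g(X)\in(\cO/\mathfrak m)[[X]]$ for the reduction of $g$; since $g(X)=-X+O(X^2)$, we have $\bar g(X)=-X+\cdots$, and the key extra structure is that $\bar g$ is additive (indeed $\F$-linear on the nose, by Remark~\ref{rmk:F-module structure}), so $\bar g$ is an $\F$-linear automorphism of $\widehat{\G}_a$ over $\cO/\mathfrak m$; the same holds for $\overline{g^{-1}}$. The reduction of $(\widehat\vp_\varpi)^g$ is then $\bar g\circ\overline{\vp_\varpi}\circ\overline{g^{-1}}$, and one expands the $q^{id}$-homogeneous part of this composite, using that $\bar g$ and $\overline{g^{-1}}$ are $\F$-linear (so conjugation by them interacts transparently with the $\F$-linear structure of $\overline{\vp_\varpi}$, which by Lemma~\ref{multi of d} has exponents in $q^{d}\mathbb Z_{\ge0}$) to conclude $\overline{\widehat H_{id}}=\overline{H_{id}}$ — equivalently, that the leading effect of conjugating by $\bar g$ on the coefficient in degree $q^{id}$ is trivial because $\bar g(X)\equiv -X$ to first order and the sign contributions from $\bar g$ and $\overline{g^{-1}}$ cancel. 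The main obstacle I anticipate is precisely this last bookkeeping step: controlling that the \emph{only} surviving contribution modulo $\mathfrak m$ to $\widehat H_{id}$ from the composite is $H_{id}$ itself, with no leftover sign and no contribution from other coefficients $H_j$ with $j<id$. Here one must use that at the vertex $(q^{id},v(H_{id}))$ the coefficients $H_j$ for $0<j<id$ lying strictly above the Newton polygon have $v(H_j)>v(H_{id})$, hence are \emph{more} divisible than $H_{id}$, so after multiplying by whatever powers of the units coming from $g$ they still vanish modulo $H_{id}\mathfrak m$; combined with $v(\varpi)=v(H_0)$ possibly large this confines every cross term to strictly larger valuation, leaving $H_{id}/\widehat H_{id}\equiv 1\pmod{\mathfrak m}$ as claimed.
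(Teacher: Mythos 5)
Your proposal is correct and is essentially the paper's argument: the paper proves the lemma by comparing the coefficients of $X^{q^{id}}$ on both sides of $(\vp_{\varpi})^{g}(g(X))=g(\vp_{\varpi}(X))$, using exactly the valuation bounds you identify ($v(H_j)>v(H_{id})$ for $j<id$ and $v(\widehat H_{ld})>v(\widehat H_{id})$ for $l<i$, both consequences of the vertex hypothesis via Lemmas~\ref{multi of d} and~\ref{NP invariant}) to show that every cross term has valuation strictly greater than $v(H_{id})$, while the leading signs $g_0=(g_0)^{q^{id}}=-1$ cancel as you predicted. One caution: your ``reduce everything modulo $\mathfrak m$'' framing is not literally what is needed when $v(H_{id})>0$ (both coefficients would then reduce to $0$ and the congruence would be vacuous); the correct comparison is modulo $H_{id}\mathfrak m$, which is what your final sentence in fact carries out, and the paper's use of the identity $(\vp_{\varpi})^{g}\circ g=g\circ\vp_{\varpi}$ rather than the three-fold composite $g\circ\vp_{\varpi}\circ g^{-1}$ makes this bookkeeping slightly lighter.
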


\begin{proof}
Lemmas \ref{multi of d} and \ref{NP invariant} imply that 
	$$
	v(H_j) > v(H_{id}),\  v(\widehat{H}_{ld}) > v(\widehat{H}_{id})
	$$
	for any $0 \leq j < q^{id}, 0 \leq l < i $. 
	Recall that we have $\widehat{\vp}_{\varpi}(X)=\vp_{\varpi}(X) \in \cO[X].$\ The assertion follows by comparing the coefficients of $X^{q^{id}}$ of both sides of the equation
	$$
	({\vp}_{\varpi})^{g}(g(X))=g({\vp}_{\varpi}(X)).
	$$
	
\end{proof}

\section{Proof of Theorem \ref{main thm} }
\label{sec:proof}
In this section, we give a proof of Theorem \ref{main thm}.  
\subsection{}
We use the notation from Section~\ref{sec:canonical submodules}.
Let $(E, \varphi)$ be a Drinfeld module over $\mathcal{O}$ of rank $r \geq2 $ and write
$$
\varphi_{\varpi}(X)=\sum_{i=0}^{rd}H_i X^{q^i}.
$$
Suppose $(q^d,v(\widehat{H}_{d}))$ is a vertex of the Newton polygon of $(\widehat{\vp}_{\varpi})^{g}=\sum_{i\geq 0}\widehat{H}_{id}X^{q^{id}} \in \cO[[X]]$. 
Since we have $H_0=\widehat{H}_0=\varpi$ and $H_{rd} \in \cO^{\times}$, the first segment of the Newton polygon of $(\widehat{\vp}_{\varpi})^{g}(X)$ is the segment whose
endpoints are $(1,1)$ and $(q^d,v(\widehat{H}_{d}))$ (Lemma \ref{multi of d}). 
We denote by $\widehat{N}$ the set of zeros (in $\Cwp$) 
of $(\widehat{\vp}_{\varpi})^{g}(X)$ corresponding to the first segment of the Newton polygon, and set $\widehat{N}_0=\left\lbrace 0\right\rbrace \cup \widehat{N}$. 
We first study the $A$-module $\widehat{N}_0$.

\begin{lem}\label{lem1}
	For any integer $i \geq 2$ and $\alpha \in \hat N$, we have
	$$
	v\left(\frac{\widehat{H}_{id}}{\widehat{H}_d} \alpha ^{q^{id}-q^d}\right)>0.
	$$
\end{lem}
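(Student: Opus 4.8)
We want to show $v(\widehat{H}_{id}/\widehat{H}_d \cdot \alpha^{q^{id}-q^d}) > 0$ for $i \geq 2$ and $\alpha \in \widehat{N}$.

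The key idea is that $\alpha$ lies on the first segment of the Newton polygon of $(\widehat{\vp}_\varpi)^g(X)$. The first segment has endpoints $(1,1)$ and $(q^d, v(\widehat{H}_d))$, so its slope is $s = (v(\widehat{H}_d) - 1)/(q^d - 1)$. Since the slope is negative (as $v(\widehat{H}_d) \geq 1$... actually we need it strictly negative — it is, because for the canonical submodule scheme to exist we are in the situation of Theorem~\ref{condition 1}, and in fact the first segment connects $(1,1)$ to $(q^d,v(\widehat H_d))$ which forces $v(\widehat H_d) < q^d$, hence $s<0$). By the theory of Newton polygons, every zero $\alpha \in \widehat{N}$ associated to this segment satisfies $v(\alpha) = -s > 0$.

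So the plan is: first, extract from the Newton polygon geometry that $v(\alpha) = -s = (1 - v(\widehat{H}_d))/(q^d-1)$. Second, rewrite
$$
v\!\left(\frac{\widehat{H}_{id}}{\widehat{H}_d}\alpha^{q^{id}-q^d}\right) = v(\widehat{H}_{id}) - v(\widehat{H}_d) + (q^{id}-q^d)v(\alpha) = v(\widehat{H}_{id}) - v(\widehat{H}_d) - (q^{id}-q^d)s.
$$
Third, observe that the point $(q^{id}, v(\widehat{H}_{id}))$ lies \emph{on or above} the Newton polygon of $(\widehat{\vp}_\varpi)^g(X)$, and that the line through $(1,1)$ of slope $s$ extended to abscissa $q^{id}$ has ordinate $1 + (q^{id}-1)s = v(\widehat H_d) + (q^{id}-q^d)s$ (using $1 + (q^d-1)s = v(\widehat H_d)$). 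Since $(q^{id}, v(\widehat{H}_{id}))$ lies above this extended line — strictly, because $(q^d,v(\widehat H_d))$ is a vertex and $i\geq 2$ so the polygon's subsequent segments have strictly larger slope — we get $v(\widehat{H}_{id}) > v(\widehat H_d) + (q^{id}-q^d)s$, which is exactly the desired strict inequality. (If $\widehat H_{id}=0$ the valuation is $+\infty$ and the claim is trivial.)

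The main obstacle is making the "lies strictly above the extended first-segment line" step airtight: one must use that $(q^d, v(\widehat H_d))$ is a genuine vertex (so the slope strictly increases after it) together with $i \geq 2$, to upgrade "on or above the Newton polygon" into "strictly above the line of slope $s$ through $(1,1)$." This is precisely where Lemma~\ref{multi of d} (only exponents $q^{id}$ occur) and the vertex hypothesis are used; everything else is the routine valuation bookkeeping sketched above.
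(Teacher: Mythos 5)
Your proof is correct and follows essentially the same route as the paper: both arguments reduce the claim to the Newton polygon fact that, since $(q^d, v(\widehat{H}_d))$ is a vertex, the point $(q^{id}, v(\widehat{H}_{id}))$ for $i\ge 2$ lies strictly above the line through $(q^d, v(\widehat{H}_d))$ of slope $-v(\alpha)$, which rearranges directly to $v(\widehat{H}_{id}) - v(\widehat{H}_d) + (q^{id}-q^d)v(\alpha) > 0$. The paper phrases this by comparing the slope of the secant from $(q^d,v(\widehat H_d))$ to $(q^{id},v(\widehat H_{id}))$ against the slope of the first segment, while you compare the ordinate of $(q^{id},v(\widehat H_{id}))$ against the extended first segment through $(1,1)$; these are the same computation.
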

\begin{proof}
	The slope of the line passing through $(q^d,v(\widehat{H}_d))$ and $(q^{id},v(\widehat{H}_{id}))$ is
	$$
	\frac{v(\widehat{H}_{id})-v(\widehat{H}_{d})}{q^{id}-q^{d}}.
	$$
	By the property of Newton polygons, this slope is larger than that of the first segment, which is $-v(\alpha)$. It follows that
	$$
	-v(\alpha)<	\frac{v(\widehat{H}_{id})-v(\widehat{H}_{d})}{q^{id}-q^{d}}
	$$
	holds for $i\geq2$. The claim follows.
\end{proof}

\begin{prop}\label{product}
	Suppose $(q^d,v(\widehat{H}_{d}))$ is a vertex of the Newton polygon of $(\widehat{\vp}_{\varpi})^{g}$. Then we have 
	$$
	X\prod_{\alpha \in \widehat{N}}(X-\alpha)=X^{q^d}+\frac{\varpi}{(1+m){H}_d}X \in \cO[X]
	$$
	for some $m \in \mathfrak{m}$.
\end{prop}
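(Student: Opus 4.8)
The plan is to compute the product $X\prod_{\alpha\in\widehat N}(X-\alpha)$ directly and match it with the degree-$\le q^d$ truncation of $(\widehat{\vp}_{\varpi})^{g}(X)$. First I would observe that, since $\widehat N_0=\{0\}\cup\widehat N$ is exactly the set of zeros (in $\Cwp$) of $(\widehat{\vp}_{\varpi})^{g}(X)$ lying on the first segment of the Newton polygon, and since the Newton polygon tells us these zeros all have valuation $-(\text{slope of first segment})>0$ while all other zeros have strictly larger valuation, the monic polynomial $P(X):=X\prod_{\alpha\in\widehat N}(X-\alpha)$ is the ``first-segment factor'' of $(\widehat{\vp}_{\varpi})^{g}(X)$. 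By Lemma~\ref{multi of d} the series $(\widehat{\vp}_{\varpi})^{g}(X)$ has only exponents $q^{id}$, so its first-segment factor $P(X)$ has degree $q^d$. Factoring $(\widehat{\vp}_{\varpi})^{g}(X)=\widehat H_{d}^{-1}\cdot(\text{leading-coefficient normalization})\cdots$, more precisely writing $(\widehat{\vp}_{\varpi})^{g}(X)=Q(X)\cdot P(X)$ with $Q(X)\in\Cwp[[X]]$ a unit power series (its constant term is the product of the remaining zeros' inverses up to sign, hence nonzero), one reads off that $P(X)$ has constant term $0$, linear coefficient equal to $\widehat H_0/Q(0)=\varpi/Q(0)$, and leading coefficient $1$; all intermediate coefficients of $P$ (those of $X^{q^j}$, $0<q^j<q^d$, $d\nmid j$... ) — one must check $P$ is $\F$-linear too, so only $X$ and $X^{q^d}$ survive.

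Next I would pin down $Q(0)$. Since $(\widehat{\vp}_{\varpi})^{g}(X)=\sum_{i\ge 0}\widehat H_{id}X^{q^{id}}$ and $P(X)=X^{q^d}+cX$ for the still-unknown $c=\varpi/Q(0)$, comparing coefficients of $X^{q^d}$ in $(\widehat{\vp}_{\varpi})^{g}(X)=Q(X)P(X)$ gives $\widehat H_{d}=Q(0)\cdot 1+(\text{higher terms of }Q\text{ times }cX\text{ contributions})$. The point is that $Q(X)=\sum_j b_j X^{q^{jd}-?}$ — actually the cleanest route is: $Q(X)=(\widehat{\vp}_{\varpi})^{g}(X)/P(X)$, and modulo $\mathfrak m$ one has $(\widehat{\vp}_{\varpi})^{g}(X)\equiv \widehat H_{d}X^{q^d}\pmod{\text{higher-valuation stuff}}$ on the relevant range, so $Q(0)\equiv\widehat H_{d}\pmod{\mathfrak m}$, i.e. $c=\varpi/\widehat H_{d}\cdot(1+m')$ for some $m'\in\mathfrak m$. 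This uses Lemma~\ref{lem1}, which says precisely that for $i\ge 2$ the terms $\widehat H_{id}\alpha^{q^{id}-q^d}$ are in $\mathfrak m$, so when one expands $\prod(X-\alpha)$ and isolates the $X$-coefficient $\pm\prod_{\alpha\in\widehat N}\alpha$, the relation $(\widehat{\vp}_{\varpi})^{g}(\alpha)=0$ lets one solve $\alpha^{q^d}=-(\widehat H_{d}/\widehat H_0)^{-1}$-type expressions and control the product.

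Finally I would convert $\widehat H_d$ to $H_d$ using Lemma~\ref{1-unit}: since $(q^d,v(H_d))=(q^d,v(\widehat H_{d}))$ is a vertex of a negative-slope segment, $H_d/\widehat H_{d}$ is a $1$-unit, so $\varpi/\widehat H_{d}\cdot(1+m')=\varpi/H_d\cdot(1+m)$ for a suitable $m\in\mathfrak m$, and the claimed form $X^{q^d}+\frac{\varpi}{(1+m)H_d}X$ follows after absorbing the unit into the denominator. Membership in $\cO[X]$ is automatic from Proposition~\ref{sub iff subsch} (the same Galois-conjugacy argument: the $\alpha\in\widehat N$ are roots of $(\widehat{\vp}_{\varpi})^{g}(X)$ of a single valuation, hence Galois-stable), but one can also just note the two nonzero coefficients $1$ and $\varpi/((1+m)H_d)$ are manifestly in $\cO$ once $m\in\mathfrak m$ is established. \emph{The main obstacle} is the bookkeeping in the middle step: showing that the $X$-coefficient of $P$, which a priori is the product of the first-segment zeros, equals $\varpi/\widehat H_{d}$ up to a $1$-unit, and simultaneously showing all the intermediate coefficients $X^{q^j}$ for $0<j<d$ vanish — the latter should follow from the $\F$-linearity of $(\widehat{\vp}_{\varpi})^{g}$ (Lemma~\ref{multi of d}) forcing $P$ to be $\F$-linear as well, since $P$ is a factor cut out by a Galois/valuation-stable set of roots that is an $\F$-submodule.
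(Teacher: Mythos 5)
Your proposal is essentially sound and rests on the same two pillars as the paper's proof --- the $\F$-module structure of the root set and the evaluation of $(\widehat{\vp}_{\varpi})^{g}$ at a root, together with Lemmas~\ref{lem1} and~\ref{1-unit} --- but you package it as a Weierstrass-type factorization $(\widehat{\vp}_{\varpi})^{g}=Q\cdot P$ with coefficient comparison, and the ``bookkeeping'' you flag as the main obstacle is exactly what the paper's organization eliminates. The paper fixes a single $\alpha\in\widehat{N}$, notes that ${}^{\sharp}\widehat{N}_0=q^d={}^{\sharp}\F$ forces $\widehat{N}_0=\{\lambda\alpha\mid\lambda\in\F\}$, and then the identity $\prod_{\lambda\in\F}(X-\lambda)=X^{q^d}-X$ gives $X\prod_{\alpha'\in\widehat{N}}(X-\alpha')=X^{q^d}-\alpha^{q^d-1}X$ in one line: this simultaneously kills all intermediate coefficients (no separate $\F$-linearity argument for $P$ is needed) and reduces the linear coefficient to a power of the single chosen root rather than a product over all of $\widehat{N}$. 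The value $-\alpha^{q^d-1}$ is then computed exactly, not just modulo $\mathfrak{m}$: dividing $\sum_{i\ge 0}\widehat{H}_{id}\alpha^{q^{id}}=0$ by $\widehat{H}_d\alpha^{q^d}$ yields $m=\sum_{i\ge 2}(\widehat{H}_{id}/\widehat{H}_d)\alpha^{q^{id}-q^d}\in\mathfrak{m}$ by Lemma~\ref{lem1}, and Lemma~\ref{1-unit} trades $\widehat{H}_d$ for $H_d$ at the cost of adjusting $m$ by a $1$-unit, as you say. If you want to keep your $Q\cdot P$ framing, you still need to justify $Q(0)\in\widehat{H}_d(1+\mathfrak{m})$ precisely (valuation considerations alone only give $v(Q(0))=v(\widehat{H}_d)$), and the only route to that is the same evaluation-at-$\alpha$ computation, so you may as well do it directly as the paper does.
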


\begin{proof}
Take an element $\alpha \in \widehat{N}$. Since ${}^{\sharp}\widehat{N}_0={}^{\sharp}\F=q^d$, 
we have 
$\widehat{N}_0=\left\lbrace \lambda \alpha 
\mid \lambda \in \F \right\rbrace$. 
Using the equality
$\prod_{\lambda \in \F}(X-\lambda)=X^{q^d}-X$, 
we obtain
$$
\prod_{\lambda \in \F}(X-\lambda \alpha)
=X^{q^d}-\alpha^{q^d-1}X.
$$
Set $m=-(1+\varpi / \alpha^{q^d-1}\widehat{H}_d)$. 
Then it suffices to show that
$m \in \mathfrak{m}$. 
From 
$$
(\widehat{\vp}_{\varpi})^{g}(\alpha)=\sum_{i \geq 0}\widehat{H}_{id}\alpha^{q^{id}}=0
$$
and
$\widehat{H}_0=\varpi$,
it follows that
$$
m=\sum_{i \geq 2} \frac{\widehat{H}_{id}}{\widehat{H}_d} \alpha ^{q^{id}-q^d}.
$$
Then the claim follows from Lemmas~\ref{lem1} and~\ref{1-unit}.

\end{proof}

\subsection{}
\begin{df}
	A polynomial $F\in \cO[X]$ is called a distinguished polynomial if $F$ is monic and all its coefficients are in $\mathfrak{m}$ except for the leading coefficient. 
\end{df}

\begin{lem}\label{lemma dist}
	Let $F\in \cO[X]$ be a distinguished polynomial. The natural inclusion $\cO[X] \hookrightarrow \cO[[X]]$ induces an $\cO$-algebra isomorphism
	$$
	\cO[X]/(F) \cong \cO[[X]]/(F) .
	$$
\end{lem}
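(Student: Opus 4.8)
The plan is to show that the natural map $\cO[X]/(F) \to \cO[[X]]/(F)$ is both surjective and injective, using the Weierstrass-type division that a distinguished polynomial affords. First I would establish the division statement: for every $g(X) \in \cO[[X]]$ there exist a unique $q(X) \in \cO[[X]]$ and a unique polynomial $s(X) \in \cO[X]$ with $\deg s < \deg F$ such that $g = qF + s$. Granting this, surjectivity of the map is immediate, since the class of $g$ in $\cO[[X]]/(F)$ equals the class of the polynomial $s \in \cO[X]$. Injectivity amounts to checking that if a polynomial $h \in \cO[X]$ lies in the ideal $(F)\cO[[X]]$, then it already lies in $(F)\cO[X]$; writing $h = qF$ with $q \in \cO[[X]]$, one applies the uniqueness in the division statement (with remainder $0$, and also to $h$ viewed inside $\cO[X]$ via ordinary polynomial division, which is possible because $F$ is monic) to conclude $q \in \cO[X]$.

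The core is therefore the division statement, which I would prove by successive approximation exploiting that $F$ is distinguished. Write $F(X) = X^n + c_{n-1}X^{n-1} + \cdots + c_0$ with $n = \deg F$ and all $c_i \in \mathfrak{m}$. Given $g \in \cO[[X]]$, I want to subtract an $\cO[[X]]$-multiple of $F$ to kill all terms of degree $\geq n$. The naive step replaces the top-degree behavior of $g$ using $X^n \equiv -(c_{n-1}X^{n-1} + \cdots + c_0) \bmod F$, which feeds degree-$\geq n$ information back into lower degrees but multiplied by elements of $\mathfrak{m}$; iterating and using $\mathfrak{m}$-adic completeness of $\cO$ (here the completeness hypothesis on $\cO$ is essential) makes the process converge. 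More precisely, I would set up an operator on $\cO[[X]]$ that, given $g$, produces the pair (partial quotient, partial remainder) and show the remainder's ``high-degree tail'' gains a factor from $\mathfrak{m}$ at each stage, so the series of corrections converges in $\cO[[X]]$ equipped with the topology where a basis of neighborhoods of $0$ is given by $\mathfrak{m}^k\cO[[X]] + X^N\cO[[X]]$; the limit gives the desired $q$ and $s$. Uniqueness follows because a difference $q'F + s'$ with $q' \in \cO[[X]]$, $\deg s' < n$, and $q'F + s' = 0$ forces, by looking at the lowest-degree coefficient of $q'F$ that is not cancelled, first $s' = 0$ and then $q' = 0$ (again using that $F$ is distinguished so that $F$ is not a zero-divisor and its ``leading term in the $X$-adic sense'' has degree $0$ with unit coefficient $c_0 \in \mathfrak{m}$... rather, one argues via the monic highest term $X^n$).

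The main obstacle I expect is packaging the convergence of the iterative division cleanly: one must choose the right topology on $\cO[[X]]$ (combining the $X$-adic and $\mathfrak{m}$-adic filtrations) and verify that the Cauchy sequence of partial quotients converges there, which is where both the completeness of $\cO$ and the distinguished property (all lower coefficients in $\mathfrak{m}$) are used simultaneously. Once division with uniqueness is in hand, both the surjectivity and the injectivity of $\cO[X]/(F) \to \cO[[X]]/(F)$ are short formal consequences, and the $\cO$-algebra structure is preserved because every map in sight is an $\cO$-algebra homomorphism by construction.
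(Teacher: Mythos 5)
Your plan matches the paper's proof in structure: the paper also reduces the lemma to two assertions---that if $f \in \cO[X]$ factors as $f = FG$ with $G \in \cO[[X]]$ then $G \in \cO[X]$ (injectivity), and that every $G \in \cO[[X]]$ admits a division $G = QF + r$ with $Q \in \cO[[X]]$, $r \in \cO[X]$ (surjectivity)---and leaves both as ``one can check.'' You have correctly identified that the engine behind both is a Weierstrass-type division with uniqueness for the distinguished polynomial $F$, and your injectivity argument (compare the power-series division of $h$ against the ordinary monic polynomial division, then invoke uniqueness) is exactly right.

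The one place where your sketch is imprecise and could fail as written is the choice of topology in the convergence argument. You propose to run the successive-approximation using $\mathfrak{m}$-adic completeness, with basic open sets $\mathfrak{m}^k\cO[[X]] + X^N\cO[[X]]$. But the paper allows $\cO$ to be, for example, the full valuation ring of $\Cwp$; in that case $\mathfrak{m}$ is not finitely generated and $\mathfrak{m}^2 = \mathfrak{m}$, so $\mathfrak{m}^k = \mathfrak{m}$ for all $k \geq 1$, the proposed topology is not Hausdorff on the relevant quotient, and $\mathfrak{m}$-adic completeness is neither assumed nor available. What actually saves the argument is the hypothesis that $\cO$ is complete for the \emph{absolute value}, together with the observation that $F$ is a polynomial, hence has only finitely many non-leading coefficients $c_0, \dots, c_{n-1} \in \mathfrak{m}$, so $\delta := \min_i v(c_i) > 0$. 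Each iteration of the ``kill the top-degree part'' step multiplies by one of the $c_i$, so the correction at stage $k$ has all coefficients of valuation $\geq k\delta$, and the partial quotients/remainders converge coefficientwise in the valuation topology on $\cO$. The same $\delta > 0$ also gives uniqueness: if $q'F + s' = 0$ with $\deg s' < n$, the recursion on the coefficients $b_j$ of $q'$ yields $\inf_j v(b_j) \geq \delta + \inf_j v(b_j)$, forcing $q' = 0$, then $s' = 0$. With this replacement of ``$\mathfrak{m}$-adic'' by ``valuation-adic with rate $\delta$,'' your proof is complete and coincides with what the paper tacitly intends.
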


\begin{proof}
	One can check that if $f$ is in $\cO[X]$ with $f=FG$ for some $G \in \cO[[X]]$ then $G \in \cO[X]$. This implies the injectivity. Moreover one may see that for any $G \in \cO[[X]]$ there exist $Q \in \cO[[X]]$ and $r \in \cO[X]$ such that $G=QF+r$. This implies the surjectivity.
\end{proof}

\begin{rem}\label{induced str}
	For a distinguished polynomial $F\in \cO[X]$, we denote by $j: \Spec \cO[[X]]/(F) \to \Spec \cO[X]/(F)$ the isomorphism as in Lemma \ref{lemma dist}. Suppose $\Spec \cO[[X]]/(F)$ has an $A$-module structure denoted by $\phi: A \to \mathrm{End}(\Spec \cO[[X]]/(F))$. Then the isomorphism $j$ induces the $A$-module structure on $\Spec \cO[X]/(F)$ as the 
composite of the ring homomorphisms 
	$$
	A \to \mathrm{End}(\Spec \cO[[X]]/(F)) \to \mathrm{End}(\Spec \cO[X]/(F)),
	$$
	where the first arrow is $\phi$ and the second map
is the map that sends 
$f \in \mathrm{End}(\Spec \cO[[X]]/(F))$
to 
$j \circ f \circ j^{-1}$. 
\end{rem}

\subsection{}
\begin{rem}\label{str}
	As in Proposition \ref{product}, we write 
	$$
	X\prod_{\alpha \in \widehat{N}}(X-\alpha)=X^{q^d}+\frac{\varpi}{(1+m){H}_d}X \in \cO[X]
	$$
	for some $m \in \mathfrak{m}$. Recall that 
	 $$\Spec 
\cO[[X]]\left/
\left(
X^{q^d}+\frac{\varpi}{(1+m){H}_d}X
\right)\right.
$$ 
	 is  an $A$-module via the ring homomorphism
	 $\vp^{g}: A \to \cO[[X]]$
that sends $a$ to 
$(\vp_a)^{g}(X)(=(\widehat{\vp}_a)^{g}(X)).$
	 Thus in the way of Remark \ref{induced str}, one may regard
     $$\mathrm{Spec} \left.\cO[X]\middle/\left(X^{q^d}+\frac{\varpi}{(1+m){H}_d}X\right)\right.$$ 
     as an $A$-module and we denote the $A$-module structure by the same symbol $\vp^{g}$.
\end{rem}
	
	\begin{rem}\label{unit u}
	Hensel's lemma says that there exists a unit $u \in \cO^{\times}$ such that 
	$$
	u^{q^{d}-1}=1+m.
	$$
	We obtain an $\cO$-algebra isomorphism
	$$
	\cO[[X]]\left/\left(X^{q^d}+\frac{\varpi}{(1+m){H}_d}X\right)\right. \to 
\cO[[X]]\left/\left(X^{q^d}+\frac{\varpi}{{H}_d}X\right)\right.
	$$
that sends $X$ to $uX$.
This isomorphism 
induces an $A$-module structure on 
$\Spec 
\cO[[X]]/(X^{q^d}+({\varpi}/{{H}_d})X)$. 
More explicitly, one may define the 
$A$-module structure by the ring homomorpism
	$$
	u\ \vp^{g} u^{-1}: A \to \cO[[X]],
$$
that sends $a \in A$ to 
$u(\vp_a)^{g}(u^{-1}X)(=u(\widehat{\vp}_a)^{g}(u^{-1}X))$.

Note that the polynomial $X^{q^d}+(\varpi/H_d)X$ is 
distinguished.   Hence by Remark~\ref{induced str},
	$$\Spec \cO[X]\left/\left(X^{q^d}+\frac{\varpi}{{H}_d}X\right)\right.$$ 
	is equipped with the structure of an $A$-module.   
We denote the $A$-module structure by the same symbol $u\ \vp^{g} u^{-1}$.
\end{rem}

\subsection{}
We denote by $(G,\psi)$ a pair of an abelian group scheme $G$ and an $A$-module structure 
$\psi: A \to \mathrm{End}(G)$.  
Denote by $N$ the set of zeros of ${\vp}_{\varpi}(X)$ corresponding to the first segment of the Newton polygon. We set $N_0=\left\lbrace 0\right\rbrace \cup N$.
\begin{lem}\label{key lem 1}
	Suppose $(q^d,v(H_{d}))$ is a vertex of the Newton polygon of $\vp_{\varpi}(X)$. 
The canonical submodule scheme 
is isomorphic to the $A$-module scheme  
	$$
	(Z(\varpi/H_d),u\ \vp^{g} u^{-1}).
	$$
\end{lem}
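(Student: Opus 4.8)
The plan is to identify the canonical submodule scheme with $Z(\varpi/H_d)$ as an $A$-module, using the change of variables $g$ to bridge between the Drinfeld module $\vp$ and the normalized form produced in Remarks~\ref{str} and~\ref{unit u}. First I would recall that, since $(q^d, v(H_d))$ is a vertex of the Newton polygon of $\vp_\varpi(X)$, Lemma~\ref{NP invariant} tells us this is also a vertex of the Newton polygon of $(\widehat{\vp}_\varpi)^g(X)$, and in fact the first segments of the two polygons agree. Hence the bijection $g$ from Lemma~\ref{NP invariant} restricts to a bijection $N \to \widehat{N}$ between the sets of first-segment zeros, carrying $N_0$ onto $\widehat{N}_0$. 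By Definition~\ref{canonical sub} and Theorem~\ref{condition 1}, the set $N_0$ is exactly the set of $\Cwp$-points of the canonical submodule scheme, so $\Phi_0(X) = \prod_{\alpha \in N_0}(X - \alpha)$ is the defining polynomial of the canonical submodule scheme and lies in $\cO[X]$ by Proposition~\ref{sub iff subsch}.

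Next I would transport the defining equation through $g$. Since $g$ is additive and invertible over $\cO[[X]]$, applying $g$ to $\Phi_0(X)$ and unwinding gives that $X \prod_{\alpha \in \widehat N}(X - g(\alpha)) = X\prod_{\beta \in \widehat N}(X-\beta)$, which by Proposition~\ref{product} equals $X^{q^d} + \frac{\varpi}{(1+m)H_d}X$ for some $m \in \mathfrak{m}$. This identifies the canonical submodule scheme — after the substitution $X \mapsto g(X)$ — with $\Spec \cO[[X]]/(X^{q^d} + \frac{\varpi}{(1+m)H_d}X)$, which carries the $A$-module structure $\vp^g$ of Remark~\ref{str} precisely because $g$ conjugates $\vp$ into $\vp^g$ and intertwines the $A$-actions on torsion points. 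Then Remark~\ref{unit u} replaces the coefficient $\frac{\varpi}{(1+m)H_d}$ by $\frac{\varpi}{H_d}$ via the Hensel unit $u$, producing the isomorphism onto $Z(\varpi/H_d)$ with $A$-module structure $u\,\vp^g u^{-1}$; finally Lemma~\ref{lemma dist} (applied to the distinguished polynomial $X^{q^d} + (\varpi/H_d)X$) lets us pass from $\cO[[X]]$ back to $\cO[X]$, so all the schemes in sight are the honest finite $\cO$-schemes of Definition~\ref{canonical subsch} and the induced structures are compatible by Remark~\ref{induced str}.

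The step I expect to be the main obstacle is verifying that the chain of $\cO$-algebra isomorphisms — $X \mapsto g^{-1}(X)$ on power-series rings, then $X \mapsto uX$ — is genuinely $A$-equivariant at the level of the defined module structures, rather than merely an isomorphism of bare schemes. Concretely, one must check that the $A$-action $\vp$ on $\varphi[\varpi]$ restricted to $N_0$ corresponds under $g$ to the action $\vp^g$ of Remark~\ref{str}: this is essentially the content of Remark~\ref{rmk:F-module structure} together with the observation that $\widehat\vp$ agrees with $\vp$ on $A$, but one has to be careful that restricting to the $q^d$ torsion points of the first segment does not break the power-series identities, and that the quotient $\cO[[X]]/(\cdots)$ sees only the relevant torsion. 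Once that compatibility is pinned down, the remaining identifications are formal: Remark~\ref{unit u} handles the unit twist and Lemma~\ref{lemma dist} handles the passage to polynomial rings, both already phrased as $A$-module statements.

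Finally, I would assemble the pieces: the composite isomorphism
$$
\Spec \cO[X]/(\Phi_0(X)) \;\xrightarrow{\ g\ }\; \Spec \cO[[X]]\Bigl/\Bigl(X^{q^d}+\tfrac{\varpi}{(1+m)H_d}X\Bigr) \;\xrightarrow{\ u\ }\; \Spec \cO[X]\Bigl/\Bigl(X^{q^d}+\tfrac{\varpi}{H_d}X\Bigr)
$$
is an isomorphism of $A$-module schemes from the canonical submodule scheme onto $(Z(\varpi/H_d), u\,\vp^g u^{-1})$, which is exactly the assertion of the lemma.
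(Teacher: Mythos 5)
Your proposal follows essentially the same path as the paper's proof: show $\Phi_0(X) \in \cO[X]$ via Proposition~\ref{sub iff subsch}, pass to power series via Lemma~\ref{lemma dist}, apply the change of variable $X \mapsto g(X)$ to reach $\Spec \cO[[X]]/(X^{q^d}+\frac{\varpi}{(1+m)H_d}X)$ with structure $\vp^g$, twist by the Hensel unit $u$, and descend back to $\cO[X]$. The only slips are cosmetic — the index in $X\prod_{\alpha \in \widehat N}(X-g(\alpha))$ should run over $N$, and ``applying $g$ to $\Phi_0$'' should be read as an equality of ideals under the automorphism $X \mapsto g(X)$ rather than a literal substitution — but the decomposition into isomorphisms and the $A$-equivariance checks you flag are exactly those in the paper.
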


\begin{proof}
Using the argument in the proof of Proposition~\ref{sub iff subsch}, 
we see that 
$$
 \Phi(X)=X\prod_{\alpha \in N}(X-\alpha)
 $$
belongs to $\cO[X].$
Since $\Phi(X)$ is a distinguished polynomial, Lemma \ref{lemma dist} says that we have an isomorphism of schemes
$\Spec \cO[X]/(\Phi(X)) \cong
\Spec \cO[[X]]/(\Phi(X))$.
Both $\Spec \cO[X]$ and 
$\Spec \cO[[X]]$ are $A$-module schemes via $\varphi$.
Since the coefficients of 
$\Phi(X)$ lie in $\cO$, 
one can check that the $A$-module structure
induces an $A$-module structure on 
$\Spec \cO[X]/(\Phi(X))$
and on 
$\Spec \cO[[X]]/(\Phi(X))$,
using the characterization of $N$
as the set of roots with small valuation.
The isomorphism above induces an isomorphism of $A$-module
schemes 
 $$(\Spec \cO[X]/(\Phi(X)),\vp) \cong (\Spec \cO[[X]]/(\Phi(X)),\vp).
 $$

The $\cO$-algebra homomorphism
$\cO[[X]] \to \cO[[X]]$ 
that sends $X$ to $g(X)$ 
is an isomorphism.    
This induces an isomorphism 
 $$
 (\Spec \cO[[X]]/(\Phi(X)),\vp) 
 \cong 
 (\Spec \cO[[X]]/(X^{q^d}+(\varpi/(1+m){H}_d)X),
(\widehat{\vp} \mid _{A})^{g})
 $$
of $A$-module schemes.

Using Lemma \ref{lemma dist}, we obtain 
an isomorphism of schemes,
and then an isomorphism of $A$-module schemes
 $$
 \left(
\Spec \cO[[X]]
\left/
\left(
X^{q^d}+\frac{\varpi}{(1+m){H}_d}X
\right),(\widehat{\vp} \mid _{A})^{g}
\right)
\right.
 \cong
\left(
Z
\left(
\frac{\varpi}{(1+m){H}_d}
\right), \varphi^g
\right)
 $$
where we set $\varphi^g$ to be 
the $A$-module structure 
induced by the isomorphism.

Finally, the isomorphism 
$\cO[X] \to \cO[X]$
of $\cO$-algebras 
that sends $X$ to $uX$ induces 
$$
\left(
Z
\left(\frac{\varpi}{(1+m){H}_d}
\right)
,\vp^{g}
\right)
 \cong
\left(Z\left(\frac{\varpi}{H_d}\right)
, u\ \vp^{g} u^{-1}
\right).
$$
\end{proof}

For our main theorem, it remains to prove 
the following lemma.
\begin{lem}
\label{key lem 2}
We have an isomorphism of $A$-module schemes
\[
\left(Z\left(\frac{\varpi}{H_d}\right)
, u\ \vp^{g} u^{-1}
\right)
\cong
\left(Z\left(\frac{\varpi}{H_d}\right),
\phi \right).
\]
where $\phi$ is as in Section~\ref{sec:definition of phi}.
\end{lem}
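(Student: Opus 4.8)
The plan is to prove the stronger statement that the two $A$-module structures $u\,\vp^{g}u^{-1}$ and $\phi$ on $Z(\varpi/H_d)$ are \emph{equal}, so that the identity morphism is the required isomorphism of $A$-module schemes. The guiding observation is that $\phi$, by its construction in Section~\ref{sec:definition of phi}, factors as the composite $A \to A_{\wp} \to A_{\wp}/\wp A_{\wp} \xrightarrow{\iota^{-1}} \F \xrightarrow{\psi} \End(Z(\varpi/H_d))$; it therefore suffices to show that $u\,\vp^{g}u^{-1}$ admits the same factorization.

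First I would extend $u\,\vp^{g}u^{-1}$ from $A$ to a ring homomorphism $\widetilde{\psi}\colon A_{\wp}\to\End(Z(\varpi/H_d))$. This is legitimate: $\widehat{\vp}$ is defined on all of $A_{\wp}$ and agrees with $\vp$ on $A$; the map $(\widehat{\vp})^{g}\colon A_{\wp}\to\cO[[X]]$ is a ring homomorphism (Remark~\ref{rmk:F-module structure}) and conjugation by $u$ is a ring automorphism of $\cO[[X]]$ for the composition product; and $X^{q^d}+(\varpi/H_d)X$ is a distinguished polynomial, so Lemma~\ref{lemma dist} gives $\cO[[X]]/(X^{q^d}+(\varpi/H_d)X)\cong\cO[X]/(X^{q^d}+(\varpi/H_d)X)$. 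The one point requiring care is to check that the additive power series $u(\widehat{\vp}_a)^{g}(u^{-1}X)$, reduced modulo this distinguished polynomial, is represented by an additive polynomial of degree $<q^d$ and hence defines a genuine group-scheme endomorphism of $Z(\varpi/H_d)$. By construction $\widetilde{\psi}$ restricts to $u\,\vp^{g}u^{-1}$ on $A$.

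Next I would show $\widetilde{\psi}(\varpi)=0$ in $\End(Z(\varpi/H_d))$. By Lemma~\ref{key lem 1} the $A$-module scheme $(Z(\varpi/H_d),\,u\,\vp^{g}u^{-1})$ is isomorphic to the canonical submodule scheme, which is an $A$-submodule scheme of $\vp[\varpi]$; since $\vp_{\varpi}$ vanishes on $\vp[\varpi]$, the element $\varpi$ acts as $0$ on the canonical submodule scheme, hence $\widetilde{\psi}(\varpi)=(u\,\vp^{g}u^{-1})(\varpi)=0$. (Alternatively, one can argue directly: by Proposition~\ref{product} and Remark~\ref{unit u} every $\Cwp$-point of $Z(\varpi/H_d)$ has the form $u\beta$ with $\beta$ a zero of $(\widehat{\vp}_{\varpi})^{g}(X)$, and since $X^{q^d}+(\varpi/H_d)X$ is separable the scheme $Z(\varpi/H_d)$ has $q^d$ distinct $\Cwp$-points, while $\widetilde{\psi}(\varpi)$ is represented by a polynomial of degree $<q^d$ vanishing at all of them, so it is $0$.)

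Finally, because $\wp A_{\wp}=\varpi A_{\wp}$ and $\widetilde{\psi}$ is a ring homomorphism with $\widetilde{\psi}(\varpi)=0$, the map $\widetilde{\psi}$ kills $\wp A_{\wp}$ and so factors through $A_{\wp}\to A_{\wp}/\wp A_{\wp}$. Precomposing the induced homomorphism $A_{\wp}/\wp A_{\wp}\to\End(Z(\varpi/H_d))$ with $\iota$ gives back $\widetilde{\psi}|_{\F}$, which by Remark~\ref{rmk:F-module structure} sends $\lambda\in\F$ to the endomorphism $X\mapsto u\lambda u^{-1}X=\lambda X$, i.e.\ to $\psi(\lambda)$. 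Hence $\widetilde{\psi}$ is the composite $A_{\wp}\to A_{\wp}/\wp A_{\wp}\xrightarrow{\iota^{-1}}\F\xrightarrow{\psi}\End(Z(\varpi/H_d))$, and restricting to $A$ gives $u\,\vp^{g}u^{-1}=\phi$. The hard part is really just the verification in the first step that $u\,\vp^{g}u^{-1}$ genuinely extends to $A_{\wp}$ with values in $\End(Z(\varpi/H_d))$; granting that, the vanishing of $\varpi$ and the final identification are formal.
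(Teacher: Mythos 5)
Your proposal is correct and follows essentially the same route as the paper: the paper's proof likewise observes that $\phi$ factors as $A \to A_\wp \to A_\wp/\wp A_\wp \xrightarrow{\iota^{-1}} \F \xrightarrow{\psi} \End(Z(\varpi/H_d))$, asserts that $u\,\vp^{g}u^{-1}$ ``factors in a similar manner,'' and then compares the two $\F$-module structures, which coincide since $u\lambda u^{-1}X=\lambda X$. Your write-up simply makes the asserted factorization explicit (extend to $A_\wp$ via $\widehat{\vp}$, show $\varpi$ acts as zero, descend to $\F$), which is a useful elaboration of the step the paper leaves implicit.
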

\begin{proof}
Recall that $\phi$ is defined as the composite
$\phi: A \to A_\wp \to A_\wp/\wp A_\wp \to \F \to \End(Z(c))$.
Here, the first is the canonical inclusion, the second is the canonical quotient map,
the third is $\iota^{-1}$ and the last is $\psi$.

The $A$-module structure $u \varphi^g u^{-1}$
factors in a similar manner.

Thus we only need to compare the $\F$-module structures
at the end.
The $\F$-module structure for $\phi$ is given
explicitly in Section~\ref{sec:definition of phi}.
The $\F$-module structure for $\varphi^g$
is given in Remark~\ref{rmk:F-module structure},
and is similar for $u \varphi^g u^{-1}$.
Since the two structures coincide, 
the claim follows.
\end{proof}

\begin{proof}[Proof of Theorem \ref{main thm}]
 The claim follows from Lemmas \ref{key lem 1} and \ref{key lem 2}. 
\end{proof}


\begin{thebibliography}{99}
\bibitem{C} R.\ Coleman, The canonical subgroup of $E$ is $
\Spec R[x]/(x^p+(p/E_{p-1}(E,\omega))x)$, Asian J.\ Math.\ 9, Number 2 (2005), 257-260.
\bibitem{G} D.\ Goss, A construction of $v$-adic modular forms, J.\ Number Theory 136
(2014), 330-338.
\bibitem{H1} S.\ Hattori, 
Duality of Drinfeld modules and $P$-adic properties of Drinfeld modular forms, arXiv:1706.07645
\bibitem{H2}S. Hattori, On the compactification of the Drinfeld modular curve of level $\Gamma_1^\Delta(n)$, preprint
\bibitem{K} N. M. Katz and B. Mazur, Arithmetic Moduli of Elliptic Curves. Ann. Math. Studies 108.
\bibitem{L} J. Lubin, Canonical subgroups of formal groups, Trans. Amer. Math. Soc. 251 (1979), 103-127. 
Princeton University Press, Princeton, NJ, (1985)
\bibitem{R} M. Rosen, Formal Drinfeld modules, J. Number Theory 103 (2003) 234-256.

\end{thebibliography}
\end{document}